\newtheorem{theorem}{Theorem}
\newtheorem{lemma}[theorem]{Lemma}
\newtheorem{remark}{Remark}
\newcommand{\Pp}{\mathbf{P}}
\newcommand{\E}{\mathbf{E}}
\newcommand{\eps}{\epsilon}
\newcommand{\R}{\mathbb{R}}
\newcommand{\Span}{\mathop{\rm span}}
\newcommand{\one}{\mathbf{1}}
\newcommand{\Fc}{\mathcal{F}}
\title{Scaling Limit for the Diffusion Exit Problem in the Levinson Case}
\author{Sergio Angel Almada Monter \and Yuri Bakhtin }
\address{School of Mathematics, Georgia Tech, Atlanta GA, 30332-0160, USA}
\email{salmada3@math.gatech.edu, bakhtin@math.gatech.edu}
\begin{document}

\maketitle

\begin{abstract}
The exit problem for small perturbations of a dynamical system in a domain is considered. It is assumed that
the unperturbed dynamical system and the domain satisfy the Levinson conditions. We assume that the
random perturbation affects the driving vector field and the initial condition, and each of the components
of the perturbation follows a scaling limit. We derive the joint scaling limit for the random exit time and exit point.
We use this result to study the asymptotics of the exit time for 1-d diffusions conditioned on rare events.
\end{abstract}
%
\section{Introduction}
\label{Sec: Notation}
Small stochastic perturbations of deterministic dynamical systems
have been studied for several decades, see, e.g., a set of lectures~\cite{Freidlin-lectures:MR1399081} and references therein. 
Properties of exit distributions for the resulting diffusions are particularly important. One reason for that is that 
one can express the solutions of parabolic and elliptic PDE's containing the generator of the diffusion via the exit 
distributions. Another reason is the possibility to use exit distributions for the analysis of the global behavior of the system.
One can cover the state space by several domains and study the process within each of them separately. Using the strong Markov property one
can then treat the exit distribution for one of the domains as the starting distribution for the next one. This approach was vital for the study
of noisy heteroclinic networks in the vanishing noise limit, see~\cite{nhn},\cite{nhn-ds}.

In this note we study a relatively simple situation called the Levinson case (see~\cite[Chapter 2]{Freidlin-lectures:MR1399081}),
where the typical exit happens along a trajectory of the deterministic flow.
 We derive a scaling law for the exit
distribution in the limit of vanishing perturbation assuming that the initial random data as well as both
deterministic and white noise components of the perturbation follow a scaling limit. 

We also show that our main result can be used to study rare events for diffusions. We present a 1-dimensional
situation where to reach a certain threshold, the diffusion has to evolve against the deterministic flow. By conditioning
on this unlikely event, we reduce the analysis to the Levinson case.

The paper is organized as follows. In Section~\ref{sec:main-results} we state the main theorem for the Levinson case,
postponing its proof to Section~\ref{sec:proof-levinson}.
In Section~\ref{sec:1-d-diffusion} we state the result on the diffusion conditioned on a rare event and derive it
from the main theorem and some auxiliary statements proven in Section~\ref{sec:aux-1-d}.

\medskip

{\bf Acknowledgment.} Yuri Bakhtin is grateful to NSF for partial support via a CAREER grant DMS-0742424.  

\section{Main result}\label{sec:main-results}

We consider a $C^2$-smooth bounded vector field~$b$ in $\R^d$. The unperturbed dynamics is given by
 the deterministic flow $S=(S^t)_{t\in\R}$ generated by $b$: 
\[
\frac{d}{dt}S^{t}x_0=b(S^{t}x_0),\quad S^{0}x_0=x_0.
\]

We will introduce three components of perturbations of this deterministic flow. They all depend on
a small parameter $\eps>0$. 

The first component is white noise perturbation generated by $\eps\sigma$, where
$\sigma:\R^d \to \R^{d \times d}$ is a $C^2$-smooth bounded matrix valued function.

The second one is $\eps^{\alpha_1}\Psi_\eps$, where $\Psi_\eps$ is
a deterministic Lipschitz vector field on $\R^d$ for each $\eps$,  converging uniformly to 
a limiting Lipschitz vector field $\Psi_0$, and $\alpha_1$ is a positive scaling exponent.
These conditions ensure that the stochastic It\^o
equation  
\begin{equation}
dX_\eps(t) = \left( b( X_\eps(t) )+\eps^{\alpha_1} \Psi_\eps(X_\eps(t)) \right)dt + \eps \sigma(X_\eps(t))dW \label{eq:PrincipalEquation}
\end{equation}
w.r.t.\ a standard $d$-dimensional Wiener process $W$ %
has a unique strong solution for any $\eps>0$ and all initial conditions (for a general background on stochastic differential equations see, e.g.,~\cite{Karatzas--Shreve}).  

The last component of the perturbation is the initial condition satisfying
\begin{equation}
X_{\epsilon }(0)=x_{0}+\epsilon ^{\alpha_2 }\xi _{\epsilon },\quad\eps>0.
\label{eq:initial_condition}
\end{equation}%
Here  $\alpha_2>0$, and $(\xi _{\epsilon })_{\epsilon >0}$ is
a family of random variables independent of $W$, such that for some random
variable $\xi _{0}$, $\xi _{\epsilon }\rightarrow \xi _{0}$ as $\epsilon
\rightarrow 0$ in distribution.
 
\medskip

Let $M$ be a smooth $C^2$-hypersurface in $\R^d$.  If 
\begin{equation*}
\tau _{\epsilon }=\inf \left \{t\ge 0:X_{\epsilon
}(t)\in M \right \},
\end{equation*}
then on $\{\tau_\eps<\infty\}$ we have $X_\eps(\tau_\eps)\in M$. We are going to study
the limiting behavior of $\tau_\eps$ and $X_\eps(\tau_\eps)$ as $\eps\to 0$ 
under the assumptions above.

Let us describe our assumptions on the joint geometry of the vector field $b$ and the surface $M$. 
First we define
\[
T=\inf \left \{t>0: S^tx_0 \in M \right \},
\]
and assume that $0<T<\infty$. Secondly, we denote $z=S^Tx_0\in M$ and assume that 
$b(z)$ does not belong to the tangent
hyperplane $T_zM$. In other words, we assume that the positive orbit of $x_0$ intersects $M$ and
the crossing is transversal. 

In the case of $\xi_\eps\equiv 0$ and $\Psi \equiv 0$, Levinson's theorem states (see \cite{Levinson:MR0033433}, 
\cite[Chapter 2]{Freidlin--Wentzell-book}, and \cite[Chapter 2]{Freidlin-lectures:MR1399081}) that 
$X_\eps(\tau_\eps)\to z$ in probability as $\eps \to 0$. Levinson worked in the PDE context and 
showed how to obtain an expansion for the solution of the corresponding elliptic PDE depending 
on the small parameter~$\eps$.  
The main result of this note describes the limiting behavior of the correction $(\tau_\eps-T,X_\eps(\tau_\eps)-z)$ and extends  \cite[Theorem 2.3]{Freidlin--Wentzell-book} to the situation with generic
perturbation parameters $\xi_0,\Psi,\alpha_1$, and $\alpha_2$. This extension is essential since, as the analysis in~\cite{nhn}
shows, in the sequential study of entrance-exit distributions for multiple domains one has to consider nontrivial scaling laws for the initial conditions; also, considering nontrivial deterministic perturbations will allow us to study rare events, see 
Section~\ref{sec:1-d-diffusion}.

We need more notation. Due to the smoothness of $b$,
\begin{equation}
b(x)=b(y)+Db(y)(x-y)+Q_1(y,x-y),\quad x,y\in\R^d, \label{eqn: b_Taylor}
\end{equation} 
where 
\begin{equation}
|Q_1(u,v)|\leq K  |v|^2 \label{eqn: bound_Q1},
\end{equation}
 for some constant $K>0$ and any $u,v \in \R^d$.
We denote by $\Phi_x(t)$ the linearization of $S$ along the
orbit of $x$:
\begin{equation}
\frac{d}{dt}\Phi_x(t)=A(t)\Phi_x(t)\text{, \ }\Phi_x(0)=I, \label{eqn: Phi_def}
\end{equation}
where $A(t)=Db(S^tx)$ and $I$ is the identity matrix. 
 
Finally, for any vector $v\in \R^d$, we define $\pi_b v\in \R$ and $\pi_M v\in T_zM$ by
\[
 v=\pi_b v \cdot b(z)+ \pi_M v,
\]
i.e., $\pi_b$ is the (algebraic) projection onto $\Span(b(z))$ along $T_zM$ and 
$\pi_M$ is the (geometric) projection onto $T_zM$ along $\Span(b(z))$.

 \begin{theorem}
 \label{thm: Main}  Let $\alpha=\alpha_1 \wedge \alpha_2\wedge 1$, and
\begin{align} \notag
\phi_0 (t) &= \mathbf{1}_{\{\alpha_2= \alpha\}}  \Phi_{x_0}(t) \xi_0 + \mathbf{1}_{\{\alpha_1= \alpha\}}\Phi_{x_0}(t) \int_0^t \Phi_{x_0} (s)^{-1} \Psi_0( S^s x) ds \\
& \quad +\mathbf{1}_{\{1=\alpha\}} \Phi_{x_0}(t)\int_{0}^{t}\Phi_{x_0}^{-1}(s)\sigma(S^s x_0)dW(s), \quad t>0.
\label{eqn: phi_0_def}
\end{align}
Then, in the setting introduced above, 
\begin{equation}
 \eps^{-\alpha}(\tau_\eps-T, X_\eps(\tau_\eps)-z){\to} (-\pi_b \phi_0(T), \pi_M \phi_0(T)). \label{eq:main_convergence_statement}
\end{equation}
in distribution.
If additionally we require that $\xi_\eps\to\xi_0$ in probability or that $\alpha_2>\alpha$, then the convergence in \eqref{eq:main_convergence_statement}
is also in probability.
\end{theorem}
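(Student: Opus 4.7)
My plan is to rescale the deviation of $X_\eps$ from the deterministic orbit $S^tx_0$, show that the rescaled deviation converges to $\phi_0$ on a fixed time interval around $T$, and then use transversality of $b(z)$ to $T_zM$ to convert this into a limit for the pair $(\tau_\eps, X_\eps(\tau_\eps))$. Concretely, I set $Y_\eps(t) = \eps^{-\alpha}(X_\eps(t)-S^tx_0)$ and use \eqref{eqn: b_Taylor} to write
\begin{equation*}
dY_\eps(t) = A(t)Y_\eps(t)\,dt + \eps^{-\alpha}Q_1(S^tx_0,\eps^\alpha Y_\eps(t))\,dt + \eps^{\alpha_1-\alpha}\Psi_\eps(X_\eps(t))\,dt + \eps^{1-\alpha}\sigma(X_\eps(t))\,dW(t),
\end{equation*}
with $Y_\eps(0) = \eps^{\alpha_2-\alpha}\xi_\eps$. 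Variation of constants with the fundamental matrix $\Phi_{x_0}(t)$ gives an integral representation of $Y_\eps$, in which the contribution of each perturbation component survives in the $\eps\to 0$ limit precisely when the corresponding exponent equals $\alpha$, matching the indicator factors in~\eqref{eqn: phi_0_def}; contributions with strictly larger exponent vanish. The nonlinear residual is controlled by~\eqref{eqn: bound_Q1}, since $\eps^{-\alpha}|Q_1(\cdot,\eps^\alpha Y_\eps)|\le K\eps^\alpha|Y_\eps|^2$, so it is lower order whenever $Y_\eps$ is bounded.

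Next I prove convergence $Y_\eps\to\phi_0$ uniformly on $[0,T+1]$, in distribution in general and in probability when $\xi_\eps\to\xi_0$ in probability or $\alpha_2>\alpha$. The strategy is a Gronwall-type closing argument: first extract a tightness estimate (e.g.\ via Burkholder--Davis--Gundy for the stochastic term and uniform bounds on $\Psi_\eps$, $\sigma$, $\Phi_{x_0}^{\pm 1}$), then pair $Y_\eps$ with the natural candidate obtained by replacing each $\eps$-dependent object in the variation of constants formula by its $\eps=0$ counterpart, and bound the difference by something proportional to $\int_0^t \|Y_\eps-\phi_0\|ds$ plus an $o(1)$ error. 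When only $\xi_\eps\Rightarrow\xi_0$ in distribution, Skorokhod's representation lets me upgrade to a.s.\ convergence on a suitable probability space to run the same argument.

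With $Y_\eps\to\phi_0$ in hand, the exit information is extracted by a local implicit function argument. Let $F$ be a $C^2$ defining function for $M$ near $z$, so $F(z)=0$ and, by transversality, $\nabla F(z)\cdot b(z)\ne 0$. Write $\tau_\eps = T + \eps^\alpha\delta_\eps$. Then
\begin{equation*}
0 = F(X_\eps(\tau_\eps)) = F\bigl(S^{T+\eps^\alpha\delta_\eps}x_0 + \eps^\alpha Y_\eps(T+\eps^\alpha\delta_\eps)\bigr) = \eps^\alpha \nabla F(z)\cdot\bigl(b(z)\delta_\eps+\phi_0(T)\bigr) + o(\eps^\alpha),
\end{equation*}
which yields $\delta_\eps\to -\nabla F(z)\cdot\phi_0(T)/\nabla F(z)\cdot b(z) = -\pi_b\phi_0(T)$, using that $\nabla F(z)$ annihilates $T_zM$ and hence $\pi_M\phi_0(T)$. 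Plugging back,
\begin{equation*}
\eps^{-\alpha}(X_\eps(\tau_\eps)-z) = b(z)\delta_\eps + Y_\eps(T) + o(1) \to -\pi_b\phi_0(T)\cdot b(z) + \phi_0(T) = \pi_M\phi_0(T),
\end{equation*}
which is \eqref{eq:main_convergence_statement}. The mode-of-convergence addendum follows because all operations other than taking $\xi_\eps\to\xi_0$ are continuous mappings that preserve probability convergence.

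The main technical obstacle I expect is the first step: giving a clean a priori bound on $Y_\eps$ uniformly on $[0,T+1]$ and simultaneously controlling the closeness of $\Psi_\eps(X_\eps)$ to $\Psi_0(S^sx_0)$ and $\sigma(X_\eps)$ to $\sigma(S^sx_0)$ without circularity, since these require knowing that $X_\eps$ stays close to the nominal orbit. A standard remedy is to introduce a stopping time $\tau_\eps^R = \inf\{t:|Y_\eps(t)|>R\}$, close Gronwall on $[0,t\wedge\tau_\eps^R]$, deduce tightness, and then show $\Pp(\tau_\eps^R<T+1)\to 0$ as $R\to\infty$ uniformly in $\eps$; after that the localization can be removed. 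The exit-time step, by contrast, is essentially a deterministic continuity/transversality argument and should present only bookkeeping difficulties.
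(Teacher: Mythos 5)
Your proposal is correct and follows essentially the same route as the paper: your rescaled deviation $Y_\eps$, its Duhamel representation, the quadratic control of the residual, and the stopping-time/Gronwall/BDG closing argument reproduce the paper's Lemma~\ref{lemma: Bakhtin}, while your implicit-function step at $z$ is the same transversality argument as the paper's Lemmas~\ref{lemma: rectification}--\ref{prop: tau_conv}, merely phrased with a defining function $\nabla F(z)$ instead of the paper's graph parametrization of $M$ over $T_zM$. The one step you compress is the a priori localization $\Pp\{|\tau_\eps-T|\le\eps^\gamma\}\to1$ for some $\gamma\in(\alpha/2,\alpha)$ (the paper's Lemma~\ref{lm:omega_eps_to_1}), which must precede writing $\tau_\eps=T+\eps^\alpha\delta_\eps$ and expanding, since otherwise the quadratic remainders of order $\eps^{2\alpha}\delta_\eps^2$ need not be $o(\eps^\alpha)$; it does follow, as you indicate, from the uniform convergence of $Y_\eps$ together with transversality.
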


\begin{remark}\label{rem:weakening-conditions-by-localization}
\rm The conditions of Theorem~\ref{thm: Main} can be relaxed using the standard localization procedure. In fact, one needs to
require uniform convergence of $\Psi_\eps\to\Psi_0$ and regularity properties of $b$ and $\sigma$
only in some neighborhood of the set $\{S^tx_0:\ 0\le t\le T(x_0)\}$.
\end{remark}

\begin{remark}\rm In applications (see~\cite{nhn},\cite{nhn-ds}), the parameters $\alpha_1$ and $\alpha_2$ can be chosen so that
the r.h.s.\ of~\eqref{eq:main_convergence_statement} is nondegenerate. 
\end{remark}

\begin{remark}\rm
In the case where $d=1$, the hypersurface $M$ is just a point. Therefore, $\pi_M$ is identical zero and the only 
contentful information Theorem~\ref{thm: Main} provides is the asymptotics of the exit time. 
\end{remark}

\section{Conditioned diffusions in 1 dimension}\label{sec:1-d-diffusion}

In this section we apply Theorem~\ref{thm: Main} to the analysis of the exit time of conditioned diffusions in 1-dimensional situation for the large deviation case.

Suppose, for each $\eps>0$, $X_\eps$ is a weak solution of the following SDE:
\begin{align*}
 dX_\eps(t)&=b(X_\eps(t))dt+\eps\sigma(X_\eps(t)) dW(t),\\
 X_\eps(0)&=x_0,
\end{align*}
where $b$ and $\sigma$ are $C^1$ functions on $\R$, such that $b(x)<0$ and $\sigma(x)\ne 0$ for all $x$ in an interval $[a_1,a_2]$ 
containing $x_0$. We introduce 
\[
 \tau_\eps=\inf\{t\ge0:\ X_\eps(t)=a_1\ \text{\rm or}\ a_2\}
\]
and  $B_\eps=\{X_\eps(\tau_\eps)=a_2\}$. Since $b<0$, $B_\eps$ is a rare event since $\lim_{\eps\to0}\Pp(B_\eps)=0$. More precise estimates
on the asymptotic behavior of $\Pp(B_\eps)$ can be obtained in terms of large deviations. However, here we study the diffusion
$X_\eps$ conditioned on the rare event $B_\eps$.

Let $T(x_0)$ denote the time it takes for the solution of $\dot x=-b(x)$ starting at $x_0$ to reach $a_2$: 
\[
T(x_0)=-\int_{x_0}^{a_2}\frac{1}{b(x)}dx.
\]

\begin{theorem} \label{thm: conditioned} Conditioned on $B_\eps$, the distribution of $\eps^{-1}(\tau_\eps-T(x_0))$ converges weakly to a centered 
Gaussian
distribution with variance 
\[
- \int_{x_0}^{a_2} \frac{ \sigma^2 (y) }{ b^3 (y) } dy.
\]
\end{theorem}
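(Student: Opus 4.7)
The plan is to realize $X_\eps$ conditioned on $B_\eps$ as a new diffusion produced by Doob's $h$-transform, to expand its drift to leading order in $\eps$, and then to read off the asymptotics from Theorem~\ref{thm: Main} applied to the auxiliary SDE. Because the initial condition is deterministic, the scaling exponent $\alpha$ in Theorem~\ref{thm: Main} will equal $1$ and only the white-noise contribution to $\phi_0$ will survive in the limit.

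Let $h_\eps(x)=\Pp_x\{X_\eps(\tau_\eps)=a_2\}$. First I would record, as one of the auxiliary statements of Section~\ref{sec:aux-1-d}, the classical identification of the conditioned law with that of the strong solution $Y_\eps$ of
\[
dY_\eps(t)=\tilde b_\eps(Y_\eps(t))\,dt+\eps\sigma(Y_\eps(t))\,d\tilde W(t),\qquad Y_\eps(0)=x_0,
\]
where $\tilde b_\eps(x)=b(x)+\eps^{2}\sigma^{2}(x)h_\eps'(x)/h_\eps(x)$. Since $h_\eps$ is affine in the Feller scale function $s_\eps$, one has $h_\eps'/h_\eps=s_\eps'/[s_\eps-s_\eps(a_1)]$ with $s_\eps'(y)=\exp(\psi_\eps(y))$ and $\psi_\eps'(y)=-2b(y)/[\eps^{2}\sigma^{2}(y)]=O(\eps^{-2})$, strictly positive. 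A single integration by parts in $\int_{a_1}^{x}e^{\psi_\eps(y)}\,dy$, whose boundary contribution at $a_1$ is exponentially small and whose remainder carries a factor $\psi_\eps''/(\psi_\eps')^{2}=O(\eps^{2})$, then yields, uniformly on compact subsets of $(a_1,a_2]$,
\[
\eps^{2}\sigma^{2}(x)\frac{h_\eps'(x)}{h_\eps(x)}=-2b(x)+\eps^{2}\Psi_\eps(x),
\]
with a Lipschitz remainder $\Psi_\eps$ converging uniformly to some $\Psi_0$. Hence $\tilde b_\eps(x)=-b(x)+\eps^{2}\Psi_\eps(x)$ fits the hypotheses of Theorem~\ref{thm: Main} with limiting vector field $-b$ and scaling exponent $\alpha_1=2$.

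Next I would apply the localized form of Theorem~\ref{thm: Main} (Remark~\ref{rem:weakening-conditions-by-localization}) to $Y_\eps$. The flow $S^t$ of $-b$ carries $x_0$ to $a_2$ in time $T(x_0)$, and the crossing of $M=\{a_2\}$ is transversal since $-b(a_2)>0$; with no random initial condition we have $\alpha=\min(\alpha_1,\alpha_2,1)=1$, so
\[
\eps^{-1}(\tau_\eps-T(x_0))\to -\pi_b\phi_0(T(x_0))
\]
in distribution, where only the white-noise term in $\phi_0$ survives. A short one-dimensional computation based on the identity $\Phi_{x_0}(t)=b(S^t x_0)/b(x_0)$ (obtained by differentiating $\ln|b(S^{\cdot}x_0)|$), on $\pi_b v=-v/b(a_2)$, and on the substitution $y=S^{s}x_0$, $ds=-dy/b(y)$ inside the It\^o isometry then identifies the limit as the centered Gaussian with variance $-\int_{x_0}^{a_2}\sigma^{2}(y)/b(y)^{3}\,dy$, as claimed.

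The hard part will be the Laplace expansion of $h_\eps'/h_\eps$ above: one must prove the stated asymptotics with a remainder of size $O(\eps^{2})$ that is also Lipschitz in $x$, uniformly in a neighborhood of the whole trajectory $\{S^t x_0:0\le t\le T(x_0)\}$, so that Theorem~\ref{thm: Main} genuinely applies to $Y_\eps$. The transversality check, the verification of the Doob representation, and the final variance integration are routine and will be collected among the auxiliary results of Section~\ref{sec:aux-1-d}.
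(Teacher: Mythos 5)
Your proposal follows the same architecture as the paper's proof: Doob's $h$-transform to identify the conditioned process as a diffusion with drift $b(x)+\eps^{2}\sigma^{2}(x)h_\eps'(x)/h_\eps(x)$, an asymptotic expansion showing this drift is $-b(x)+O(\eps^{2})$ uniformly near the orbit, an application of the localized Theorem~\ref{thm: Main} with $\alpha=1$ so that only the white-noise term of $\phi_0$ survives, and the same closed-form computation $\Phi_{x_0}(t)=b(S^tx_0)/b(x_0)$ followed by It\^o isometry and the substitution $y=S^sx_0$. The one genuine methodological difference is in the drift asymptotics: you integrate by parts in the scale-function integral, extracting the factor $\psi_\eps''/(\psi_\eps')^{2}=O(\eps^{2})$, whereas the paper (Lemma~\ref{lemma: b_approx}) splits $\int_{a_1}^{x}h_\eps$ into a far piece $I_{\eps,1}$, shown to be exponentially negligible, and a near piece $I_{\eps,2}$ over $[x-\eps^\beta,x]$ evaluated by a Taylor expansion of the exponent. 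Both are standard Laplace-type arguments of comparable difficulty; your version is arguably shorter. One small refinement worth adopting from the paper: rather than taking $\alpha_1=2$, which forces you to identify an actual uniform limit $\Psi_0$ of the $O(\eps^{2})$ remainder (and to prove it, as you acknowledge, with uniform Lipschitz control --- the part you defer), the paper fixes $\beta\in(1,2)$ and writes the correction as $\eps^{\beta}\Psi_{\eps,\beta}$ with $\Psi_{\eps,\beta}\to 0$ uniformly; then only the sup-norm bound $O(\eps^{2})=o(\eps^{\beta})$ is needed, the limiting field is $\Psi_0\equiv 0$, and since $\beta>1$ one still has $\alpha=1$ and the same limit law. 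This sidesteps the hardest part of your program while changing nothing in the conclusion.
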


To prove this theorem, we will need two auxiliary statements. Their proofs are given in Section~\ref{sec:aux-1-d}.

\begin{lemma} \label{lemma: conditioned_drift}Conditioned on $B_\eps$, the process $X_\eps$ is a diffusion with the same
diffusion coefficient as the unconditioned process, and with the
drift coefficient given by 
\[
b_\eps(x)= b(x)+\eps^2\sigma^2(x)\frac{h_\eps(x)}{\int_{a_1}^x h_\eps(y)dy},
\]
where
\begin{equation}
 \label{eq:h_eps}
 h_\eps(x)=\exp\left\{-\frac{2}{\eps^2}\int_{a_1}^x \frac{b(y)}{\sigma^2(y)}dy\right\}.
\end{equation}
\end{lemma}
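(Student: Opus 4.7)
The proof is a direct application of Doob's $h$-transform for one-dimensional diffusions, with the classical scale-function computation supplying the harmonic function.

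First I would identify the conditioning function. Let $p_\eps(x) = \Pp_x(B_\eps) = \Pp_x\bigl(X_\eps(\tau_\eps) = a_2\bigr)$, regarded as a function of the starting point $x \in [a_1, a_2]$. Since $p_\eps$ is bounded and harmonic for the infinitesimal generator
\[
L_\eps = b(x)\frac{d}{dx} + \tfrac{1}{2}\eps^2 \sigma^2(x)\frac{d^2}{dx^2}
\]
on $(a_1, a_2)$ with boundary values $0$ at $a_1$ and $1$ at $a_2$, standard ODE theory gives $p_\eps$ explicitly via the scale function. Solving $L_\eps p_\eps = 0$ yields
\[
p_\eps(x) = \frac{\int_{a_1}^{x} h_\eps(y)\,dy}{\int_{a_1}^{a_2} h_\eps(y)\,dy},
\]
where $h_\eps$ is exactly the density in~\eqref{eq:h_eps}; this uses that $h_\eps$ is the derivative of the scale function associated with the SDE coefficients $b$ and $\eps \sigma$.

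Next, I would invoke Doob's $h$-transform (in the form applicable to one-dimensional diffusions conditioned on a boundary exit event). Since $X_\eps$ exits $[a_1,a_2]$ in finite time almost surely, and $p_\eps$ is a strictly positive harmonic function on $(a_1,a_2)$, conditioning the process on the event $B_\eps$ produces a new diffusion whose law is obtained by the Girsanov-type change of measure with Radon--Nikodym derivative $p_\eps(X_\eps(t \wedge \tau_\eps))/p_\eps(x_0)$ on $\mathcal{F}_{t\wedge\tau_\eps}$. The resulting conditioned process has the same diffusion coefficient $\eps\sigma(x)$, and drift
\[
b_\eps(x) = b(x) + \eps^2 \sigma^2(x)\,\frac{p_\eps'(x)}{p_\eps(x)},
\]
the latter formula being the standard consequence of the $h$-transform when the underlying generator is $L_\eps$.

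Finally, a direct computation from the explicit form of $p_\eps$ gives $p_\eps'(x) = h_\eps(x)/\int_{a_1}^{a_2} h_\eps(y)\,dy$, so that
\[
\frac{p_\eps'(x)}{p_\eps(x)} = \frac{h_\eps(x)}{\int_{a_1}^{x} h_\eps(y)\,dy},
\]
which substituted above gives the claimed drift $b_\eps$. The only mildly delicate step is the justification of Doob's $h$-transform at the level of laws of diffusions (ensuring that $p_\eps$ is $C^2$, strictly positive on the open interval, and that the conditioned process is well-defined up to the hitting time); this is classical and can be quoted from any standard treatment of one-dimensional diffusions, so I anticipate no real obstacle.
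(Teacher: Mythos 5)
Your proposal is correct and follows essentially the same route as the paper: you identify the harmonic function $p_\eps(x)=\Pp_x(B_\eps)$ via the boundary-value problem $L_\eps p_\eps=0$, solve for it explicitly in terms of $h_\eps$, and deduce the conditioned drift from the $h$-transform identity $\bar{L}f = u_\eps^{-1}L_\eps(u_\eps f)$. The only difference is that you quote Doob's $h$-transform as a known theorem, whereas the paper derives the conditioned generator from scratch via the Markov property and a short $o(t)$ estimate on the contribution of paths with $\tau_\eps < t$.
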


Further analysis requires understanding the limiting behavior of $b_\eps$. This is the purpose of the next lemma:
\begin{lemma} \label{lemma: b_approx} There is $\delta>0$ such that
\[
\limsup_{\eps \to 0}  \eps^{-2}\left(\sup_{x \in [x_0-\delta, a_2+\delta]} | b_\eps(x) + b (x) | \right) < \infty.
\]
\end{lemma}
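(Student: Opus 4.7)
Writing
\[
b_\eps(x)+b(x) \;=\; 2b(x)+\eps^2\sigma^2(x)\,\frac{h_\eps(x)}{\int_{a_1}^x h_\eps(y)\,dy},
\]
the lemma is equivalent to the uniform asymptotic
\[
\eps^2\sigma^2(x)\,\frac{h_\eps(x)}{\int_{a_1}^x h_\eps(y)\,dy} \;=\; -2b(x) + O(\eps^2)
\]
on $[x_0-\delta,a_2+\delta]$. My approach is a Laplace-type expansion at the right endpoint, carried out by integration by parts with a short bootstrap.

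\emph{Setup.} Set $H(y)=-\int_{a_1}^y b(z)/\sigma^2(z)\,dz$, so that $h_\eps(y)=e^{2H(y)/\eps^2}$ and $H'(y)=-b(y)/\sigma^2(y)>0$. By the $C^1$ regularity of $b,\sigma$ and the strict inequalities $b<0$, $\sigma\neq 0$ on $[a_1,a_2]\ni x_0$, I may fix $\delta>0$ small enough that both persist on an open neighborhood of $[x_0-\delta,a_2+\delta]$. On this compact set, $\phi:=1/H'=-\sigma^2/b$ is $C^1$, positive, and bounded together with $\phi'$, while $H$ is strictly increasing with $H'$ bounded below by a positive constant, so $H(x)-H(a_1)$ is bounded below by a positive constant uniformly in $x$.

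\emph{Main step.} Using the identity $H'e^{2H/\eps^2}=\tfrac{\eps^2}{2}(e^{2H/\eps^2})'$, integration by parts gives
\[
\int_{a_1}^x h_\eps(y)\,dy \;=\; \tfrac{\eps^2}{2}\phi(x)e^{2H(x)/\eps^2} - \tfrac{\eps^2}{2}\phi(a_1) - \tfrac{\eps^2}{2}\int_{a_1}^x \phi'(y)e^{2H(y)/\eps^2}\,dy.
\]
Bounding the last integral crudely by $\|\phi'\|_\infty \int_{a_1}^x h_\eps(y)\,dy$ and absorbing it into the left-hand side (valid for $\eps$ small enough that $\tfrac{\eps^2}{2}\|\phi'\|_\infty<\tfrac12$) yields the a priori estimate $\int_{a_1}^x h_\eps(y)\,dy\le C\eps^2 e^{2H(x)/\eps^2}$. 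Plugging this bootstrap bound back into the identity, and noting that $\phi(a_1)$ is $O(1)$ while the main term is of order $\eps^2 e^{2H(x)/\eps^2}$ with $e^{2H(x)/\eps^2}$ exploding, one obtains
\[
\int_{a_1}^x h_\eps(y)\,dy \;=\; \tfrac{\eps^2}{2}\phi(x)e^{2H(x)/\eps^2}\bigl(1+O(\eps^2)\bigr)
\]
uniformly in $x$. Inversion then gives $\eps^2\sigma^2(x)h_\eps(x)/\int_{a_1}^x h_\eps(y)\,dy = (2\sigma^2(x)/\phi(x))(1+O(\eps^2)) = -2b(x)+O(\eps^2)$, which is the desired asymptotic.

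\emph{Where the work lies.} The algebra is routine; the real point is to make sure every implicit $O$-constant is uniform over $x\in[x_0-\delta,a_2+\delta]$. This uniformity rests on the uniform positive lower bound for $H'$ (which is what makes the integration by parts generate a genuine factor of $\eps^2$ rather than merely a formal one) and on the uniform boundedness of $\phi,\phi'$, both of which follow from the $C^1$ regularity and the non-vanishing assumptions once $\delta$ is chosen small. Notably, only $C^1$ smoothness of $b,\sigma$ is required: the bootstrap replaces a second integration by parts that would otherwise call on $\phi''$.
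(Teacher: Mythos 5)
Your proof is correct, and it takes a genuinely different route from the paper. The paper's proof splits $\int_{a_1}^x h_\eps$ at the point $x-\eps^\beta$ (for a fixed $\beta\in(1,2)$), shows the tail piece is exponentially negligible, and evaluates the near piece by second-order Taylor expansion of $\Phi$ around $x$ followed by a change of variables — a fairly explicit Laplace evaluation with an auxiliary free parameter. Your proof replaces all of that by a single integration by parts in the disguised variable $H$ together with a bootstrap: write $h_\eps = \phi\cdot\frac{\eps^2}{2}(h_\eps)'$ with $\phi=1/H'$, integrate once, use the resulting identity first crudely (absorbing the $\|\phi'\|_\infty$ term to get the a priori bound $\int_{a_1}^x h_\eps \le C\eps^2 e^{2H(x)/\eps^2}$) and then a second time to extract the $1+O(\eps^2)$ correction; the boundary term at $a_1$ is killed by the uniform gap $H(x)\ge H(x_0-\delta)>0$. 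This avoids the arbitrary splitting parameter and the explicit remainder estimate of the Taylor-form Laplace argument, at the cost of one algebraic self-reference (the bootstrap), which you handle correctly by requiring $\tfrac{\eps^2}{2}\|\phi'\|_\infty<\tfrac12$. The regularity demands are identical: both proofs really need $b,\sigma\in C^1$, so that $\Phi\in C^2$ in the paper's version and $\phi\in C^1$ in yours — your closing remark that the bootstrap spares you a reliance on $\phi''$ is accurate, though the paper's Taylor expansion is also only second-order and thus does not go beyond $C^1$ either. One small point of bookkeeping: the non-vanishing of $b$ and $\sigma$ and boundedness of $\phi,\phi'$ should be arranged on $[a_1,a_2+\delta]$, not merely on a neighborhood of $[x_0-\delta,a_2+\delta]$, since the integration by parts runs from $a_1$; this follows immediately from the hypothesis $b<0$, $\sigma\ne 0$ on $[a_1,a_2]$ plus continuity, so the conclusion is unaffected.
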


\begin{proof}[Proof of Theorem~\ref{thm: conditioned}]
Let us fix $\beta \in (1,2)$. Lemmas~\ref{lemma: conditioned_drift} and~\ref{lemma: b_approx} imply that
$X_\eps$ conditioned on $B_\eps$, up to $\tau_\eps$ satisfies an SDE of the form 
\[
d X_\eps (t)= \left( -b(X_\eps(t)) + \eps^{\beta} \Psi_{\eps,\beta} ( X_\eps (t) ) \right) dt + \eps \sigma ( X_\eps (t) ) d\tilde W(t),
\]
for some Brownian Motion $\tilde W$ and with $\Psi_{\eps, \beta} \to 0$ uniformly as $\eps \to 0$. We can assume that after time $\tau_\eps$,
this process still follows the same equation at least up to the time it hits $x_0 -\delta$ or $a_1+\delta$.

So, we can apply Theorem~\ref{thm: Main}  (taking into account Remark~\ref{rem:weakening-conditions-by-localization}) to see that 
\begin{equation} \label{eqn: conv_example}
\eps^{-1} ( \tau_\eps - T(x_0) ) \stackrel{\Pp}{\longrightarrow} 
 - \frac{1}{b(a_2)}
\Phi_{x_0}(T( x_0) )\int_{0}^{T (x_0)}\Phi_{x_0}^{-1}(s)\sigma(S^s x_0)d\tilde W(s), \quad \eps\to0,
\end{equation}
where $S^t x_0$ is the flow generated by the vector field $-b$, the time $T(x_0)$ solves $S^{T(x_0)}x_0=a_2$, and
$\Phi_{x_0}$ is the linearization of $S$ near the orbit of $x_0$.
The limit is clearly a centered Gaussian random variable. To compute its variance we must first solve
\[
\frac{d}{dt}\Phi_{x_0} (t) = - b^\prime (S^t x_0 ) \Phi_{x_0} (t), \quad \Phi_{x_0} (0)=1.
\]
 The solution to this linear ODE is 
\[
\Phi_{x_0} (t)= \exp \left \{- \int_0^t b^\prime (S^s x_0 )ds \right \},
\]
so that after the change of variables $u=S^s x_0$ in the integral, we get
\[
\Phi_{x_0} (t)= \frac{b(S^t x_0)}{b(x_0)}.
\]
Using this expression and It\^o isometry for the limiting random variable in~\eqref{eqn: conv_example}, we get that the variance of such random variable is
\[
\int_0^{T(x_0)} \frac{ \sigma^2 ( S^tx_0 ) }{ b^2 ( S^t x_0 ) } dt.
\]
We can now use the change of variable $u=S^s x_0$ to get the expression in Theorem~\ref{thm: conditioned}.
\end{proof}

\section{Proof of the main result}\label{sec:proof-levinson}

With high probability, at time $T$ the process $X_\eps$ is close to $z$ and the hitting
time~$\tau_\eps$ is close to $T$. The idea of the proof is that while the diffusion is close to~$z$, the process
may be approximated very well by motion with constant velocity~$b(z)$. 

We start with a lemma and postpone its proof to the end of this section to keep continuity of the text.

\begin{lemma} 
\label{lemma: Bakhtin}
Let $X_\eps$ be the solution of the SDE~\eqref {eq:PrincipalEquation} with initial condition~\eqref{eq:initial_condition}. 
Let
\begin{align}
\Theta_\eps (t) &=\eps^{\alpha_2-\alpha}\Phi_{x_0}(t) \xi_\eps+  \eps^{\alpha_1-\alpha}\Phi_{x_0}(t) \int_0^t \Phi_{x_0} (s)^{-1} \Psi_0( S^s x_0) ds \notag \\
&\quad + \eps^{1-\alpha} \Phi_{x_0}(t) \int_0^t \Phi_{x_0} (s)^{-1}\sigma( S^s x_0) dW(s).  \label{eqn: SDE_Theta} 
\end{align} Then, 
\[
X_\eps(t)=S^tx_0 + \eps^\alpha \phi_\eps (t)
\]
holds almost surely for every $t>0$,
where $\phi_\eps(t)=\Theta_\eps(t)+ r_\eps(t)$, and $r_\eps$ converges to 0 uniformly over compact time intervals in probability.

If $\xi_\eps\to\xi_0$ in distribution, then for any $T>0$, 
$\phi_\eps \to \phi_0$  in distribution in $C[0,T]$ equipped with uniform norm,
 where $\phi_0$ is the stochastic process defined in~\eqref{eqn: phi_0_def}.

If $\xi_\eps\to\xi_0$ in probability or $\alpha_2>\alpha$, then  the uniform convergence for $\phi_\eps$
also holds in probability.
\end{lemma}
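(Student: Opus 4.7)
The plan is to rewrite everything in terms of $\phi_\eps(t) = \eps^{-\alpha}(X_\eps(t) - S^t x_0)$ and derive an integral equation that makes the identification $\phi_\eps = \Theta_\eps + r_\eps$ transparent. Computing the differential of $X_\eps(t) - S^t x_0$ and using the Taylor expansion~\eqref{eqn: b_Taylor} of $b$ at $S^t x_0$, the drift splits into a linear part $A(t)(X_\eps(t) - S^t x_0)$, a quadratic remainder $Q_1(S^t x_0, X_\eps(t) - S^t x_0)$, and the perturbation $\eps^{\alpha_1}\Psi_\eps(X_\eps(t))$. After dividing by $\eps^\alpha$, $\phi_\eps$ solves a linear SDE driven by $A(t)$ with initial condition $\phi_\eps(0) = \eps^{\alpha_2-\alpha}\xi_\eps$.

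Variation of parameters via the fundamental matrix $\Phi_{x_0}$ then yields
\begin{align*}
\phi_\eps(t) &= \eps^{\alpha_2-\alpha}\Phi_{x_0}(t)\xi_\eps + \eps^{\alpha_1-\alpha}\Phi_{x_0}(t)\int_0^t \Phi_{x_0}(s)^{-1}\Psi_\eps(X_\eps(s))\,ds \\
&\quad + \eps^{1-\alpha}\Phi_{x_0}(t)\int_0^t \Phi_{x_0}(s)^{-1}\sigma(X_\eps(s))\,dW(s) \\
&\quad + \eps^{-\alpha}\Phi_{x_0}(t)\int_0^t \Phi_{x_0}(s)^{-1}Q_1(S^s x_0,\eps^\alpha \phi_\eps(s))\,ds.
\end{align*}
Subtracting $\Theta_\eps(t)$ shows that $r_\eps(t) := \phi_\eps(t) - \Theta_\eps(t)$ is the sum of three contributions: (i) the quadratic remainder, which by~\eqref{eqn: bound_Q1} carries an effective prefactor $\eps^{-\alpha}\cdot \eps^{2\alpha} = \eps^\alpha$ in front of $|\phi_\eps|^2$; (ii) the difference $\Psi_\eps(X_\eps(s)) - \Psi_0(S^s x_0)$, controlled by the uniform convergence $\Psi_\eps\to\Psi_0$ combined with Lipschitz continuity of $\Psi_0$ and $X_\eps(s) - S^s x_0 = \eps^\alpha\phi_\eps(s)$; and (iii) the difference $\sigma(X_\eps(s)) - \sigma(S^s x_0)$ inside a stochastic integral, for which Lipschitz continuity of $\sigma$ and the Burkholder--Davis--Gundy inequality produce a bound proportional to $\eps^\alpha$ times the $L^2$-norm of $\phi_\eps$ on $[0,t]$.

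The main obstacle is an \emph{a priori} tightness bound on $\sup_{t\le T}|\phi_\eps(t)|$ that closes the circular dependence between ``$r_\eps$ is small'' and ``$\phi_\eps$ is bounded'' introduced by the quadratic term. I would handle this by a standard stopping-time argument: set $\tau_N = \inf\{t: |\phi_\eps(t)| \ge N\}$, apply the estimates above on $[0, t\wedge\tau_N]$, and close them via Gronwall on the deterministic parts together with Doob's maximal inequality and BDG on the stochastic part, producing a uniform bound on $\E\sup_{s\le t\wedge \tau_N}|r_\eps(s)|^2$. Since $\Theta_\eps$ is tight in $C[0,T]$ (being a linear ODE driven by a fixed Gaussian martingale plus a deterministic integral plus the tight random variable $\eps^{\alpha_2 - \alpha}\xi_\eps$), one deduces that $\sup_\eps\Pp(\tau_N \le T)\to 0$ as $N\to\infty$, whence $\sup_{t\le T}|r_\eps(t)|\to 0$ in probability.

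Finally, the convergence $\phi_\eps \to \phi_0$ in $C[0,T]$ reduces by Slutsky's theorem to convergence of $\Theta_\eps$. The three summands carry prefactors $\eps^{\alpha_i - \alpha}$ which equal $1$ when $\alpha_i = \alpha$ and vanish otherwise, exactly producing the indicator functions in~\eqref{eqn: phi_0_def}: the first summand converges in distribution to $\one_{\{\alpha_2=\alpha\}}\Phi_{x_0}(t)\xi_0$ by the continuous mapping theorem, the second is deterministic and converges uniformly by uniform convergence of $\Psi_\eps$, and the stochastic integral in the third summand does not depend on $\eps$ beyond its prefactor. Independence of $\xi_\eps$ and $W$ gives joint convergence in $C[0,T]$, and the strengthened modes of convergence in the last assertion follow verbatim once $\xi_\eps\to\xi_0$ in probability (or $\alpha_2>\alpha$, which simply kills the initial-condition term).
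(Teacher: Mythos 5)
Your proposal is correct and follows essentially the same route as the paper: Taylor-expand $b$, $\sigma$ and $\Psi_\eps$ around the flow $S^t x_0$, apply Duhamel's principle to get $\phi_\eps = \Theta_\eps + r_\eps$, then localize with a stopping time to break the circular dependence between smallness of $r_\eps$ and boundedness of $\phi_\eps$, and finally pass to the limit in $\Theta_\eps$ using the indicator structure of the prefactors $\eps^{\alpha_i-\alpha}$. The only real variation is the stopping device: you stop $\phi_\eps$ at a fixed level $N$ and close the bootstrap by tightness of $\Theta_\eps$ (choose $N$ large, then $\eps$ small), whereas the paper stops $\Delta_\eps^t=\eps^\alpha\phi_\eps(t)$ at level $\eps^{\alpha\delta}$ with $\delta\in(1/2,1)$, a level that diverges in the $\phi_\eps$ scale, so that $\Pp\{l_\eps(\delta)<T\}\to 0$ follows directly from $\Theta_\eps=O_P(1)$ while the constraint $\delta>1/2$ still makes the quadratic remainder $o(\eps^\alpha)$ on the stopped interval; both devices work. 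One small note: the Gronwall step you invoke is unnecessary here (and the paper does not use it) --- once stopped, all three contributions to $r_\eps$ admit direct bounds in terms of $N$ and $\eps$ without any iteration.
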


\begin{remark}\rm 
This lemma gives the first-order approximation for $X_\eps(t)$. Higher-order approximations in the spirit of~\cite{Blagoveschenskii:MR0139204}
are also possible. They can be used to refine Theorem~\ref{thm: Main}.
\end{remark}

Our task now is to analyze the process $X_\eps(t)-z$ for $t$ close to $T$. Let us first estimate the deviation of the flow $S$
from the motion with costant velocity~$b(z)$. Let 
\begin{equation}
r_\pm (t,x)= S^{\pm t}x-\left( x\pm tb(z) \right),\quad t>0,\ x\in\R^d. \label{eqn: def_r_pm}
\end{equation}
\begin{lemma}
\label{lemma: rectification}
There are constants $C_1$ and $C_2$ so that for any $t>0$ and $x\in\R^d$
\[
\sup_{s\leq t}\left|  r_\pm(s,x) \right| \leq C_1 e^{C_2 t }( t |x-z | + t^2). 
\]
\end{lemma}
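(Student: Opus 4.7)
The plan is to treat $r_+$ and $r_-$ symmetrically using a Gronwall-type argument applied to the integral equation each satisfies. I will only describe $r_+$; the case of $r_-$ is identical after replacing $b$ by $-b$ since $S^{-t}$ is the flow of $-b$.

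First, I differentiate the definition in~\eqref{eqn: def_r_pm}. Since $\frac{d}{dt} S^t x = b(S^t x)$, I have $\dot r_+(t,x) = b(S^t x) - b(z)$ with $r_+(0,x) = 0$. The key input is that $b$ is globally Lipschitz: the bound $|Q_1(u,v)| \le K|v|^2$ combined with boundedness of $b$ forces $\|Db(u)\|$ to be uniformly bounded (take $|v|=1$ in the Taylor expansion), so some $L<\infty$ satisfies $|b(y)-b(y')| \le L|y-y'|$ for all $y,y' \in \R^d$.

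Next, I rewrite $S^s x - z = r_+(s,x) + (x - z) + s b(z)$, which after applying the Lipschitz bound and integrating gives
\[
|r_+(t,x)| \le L\int_0^t |r_+(s,x)|\, ds + Lt|x-z| + \tfrac{L|b(z)|}{2} t^2.
\]
Setting $f(t) = \sup_{s\le t}|r_+(s,x)|$ (which is nondecreasing in $t$) and noting the right-hand side above is itself nondecreasing in $t$, I obtain $f(t) \le L\int_0^t f(s)\,ds + Lt|x-z| + \tfrac{L|b(z)|}{2}t^2$. A standard Gronwall inequality then yields
\[
f(t) \le \left(Lt|x-z| + \tfrac{L|b(z)|}{2} t^2\right) e^{Lt},
\]
which is of the claimed form with $C_1 = L \max(1, |b(z)|/2)$ and $C_2 = L$.

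There is no real obstacle here; the only subtlety is to make sure we have a global Lipschitz constant for $b$ (which comes for free from the standing assumptions as sketched above) and to choose the right Gronwall functional, namely the monotone envelope $f$, so that the sup in the statement of the lemma comes out with the same bound rather than just the pointwise value $|r_+(t,x)|$. The case of $r_-$ goes through line-by-line after noting that the flow of $-b$ has the same Lipschitz constant and $r_-$ solves the analogous integral equation with derivative $-(b(S^{-t}x) - b(z))$.
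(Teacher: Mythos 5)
Your proof is correct and follows essentially the same route as the paper: the same decomposition $S^sx - z = r_+(s,x) + (x-z) + sb(z)$, the same integral inequality via the Lipschitz bound on $b$, and Gronwall's lemma. The extra care you take with the monotone envelope (and with justifying the global Lipschitz constant) is fine but not a different argument; the paper's pointwise bound already yields the sup since the right-hand side is increasing in $t$.
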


\begin{proof}
We prove the result for $r_+$. The analysis of $r_-$ is similar since $S^{-t}x$ is the solution to the ODE
\[
\frac{d}{dt} S^{-t}x = -b (S^{-t}x ).
\]
Let $L>0$ be the Lipschitz constant of $b$. The proof follows from the inequalities:
\begin{align*}
\left| r_+(t,x) \right| &\leq \int_0^t \left| b(S^sx)-b(z)\right|ds \\
& \leq L \int_0^t \left| S^sx-z\right|ds \\
& \leq L\int_0^t \left|r_+ (s,x) \right|ds + L\int_0^t \left | x + sb(z)-z\right| ds \\
& \leq L\int_0^t \left|r_+ (s,x) \right|ds + L\int_0^t |x-z|ds + L\int_0^t s|b(z)| ds \\
&\leq  L\int_0^t \left| r_+(s,x) \right|ds + L t |x-z| + t^2 L|b(z)|/2.
\end{align*}
The result follows as an application of Gronwall's lemma.
\end{proof}

\begin{lemma} \label{lemma: X_before_after}
Let $\gamma \in (\alpha/2, \alpha)$. Then, there are two a.s.-continuous stochastic processes $\Gamma_{\eps,\pm}$ such that
\[
\sup_{t\in[0,\eps^\gamma]} |\Gamma_{\eps,\pm} (t)| \stackrel{\Pp}{\longrightarrow} 0, \quad \eps \to 0,
\]
and almost surely for any $t\in[0,\eps^\gamma]$
\begin{equation}
X_\eps (T-t)=z-tb(z)+\eps^\alpha \left( \phi_\eps (T-t) +\Gamma_{\eps,-}(t) \right) \label{eqn: X_before}
\end{equation} and 
\begin{equation}
X_\eps (T+t)=z+tb(z)+\eps^\alpha \left( \Phi_z (t) \phi_\eps (T) +\Gamma_{\eps,+}(t) \right). \label{eqn: X_after}
\end{equation}
\end{lemma}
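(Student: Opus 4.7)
The plan is to start from the first-order asymptotic expansion $X_\eps(t) = S^t x_0 + \eps^\alpha \phi_\eps(t)$ provided by Lemma~\ref{lemma: Bakhtin} and then replace the deterministic flow near time $T$ by its tangent line $z \pm t b(z)$ using Lemma~\ref{lemma: rectification} centered at $x=z$. Since $S^{T-t}x_0 = S^{-t}z$, the pre-crossing case~\eqref{eqn: X_before} reduces immediately to $S^{-t}z = z - t b(z) + r_-(t,z)$ with $|r_-(t,z)| \le C_1 e^{C_2 t} t^2$ (the linear term in Lemma~\ref{lemma: rectification} vanishes because $|x-z|=0$). Setting $\Gamma_{\eps,-}(t) = \eps^{-\alpha} r_-(t,z)$ then delivers the identity, and for $t \le \eps^\gamma$ the bound $|\Gamma_{\eps,-}(t)| \le C \eps^{2\gamma-\alpha}$ tends to zero \emph{deterministically} because $\gamma > \alpha/2$.

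For the post-crossing identity~\eqref{eqn: X_after} the same rectification gives $S^t z = z + tb(z) + r_+(t,z)$, but one additionally has to compare $\phi_\eps(T+t)$ with $\Phi_z(t)\phi_\eps(T)$. The key algebraic observation is the cocycle identity $\Phi_{x_0}(T+t) = \Phi_z(t)\Phi_{x_0}(T)$, which is immediate from~\eqref{eqn: Phi_def} together with $S^{T+s}x_0 = S^s z$. Substituting this into the definition~\eqref{eqn: SDE_Theta} and telescoping the integrals from $T$ to $T+t$ yields
\[
\Theta_\eps(T+t) - \Phi_z(t)\Theta_\eps(T) = \Phi_z(t)\Phi_{x_0}(T)\left[\eps^{\alpha_1-\alpha}\int_T^{T+t}\Phi_{x_0}(s)^{-1}\Psi_0(S^s x_0)\,ds + \eps^{1-\alpha}\int_T^{T+t}\Phi_{x_0}(s)^{-1}\sigma(S^s x_0)\,dW(s)\right],
\]
the $\xi_\eps$ term cancelling entirely. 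The Lebesgue piece is $O(\eps^{\alpha_1-\alpha+\gamma})$ uniformly in $t\in[0,\eps^\gamma]$, hence $o(1)$ because $\alpha_1 \ge \alpha$; the stochastic piece has quadratic variation of order $\eps^\gamma$ on this window, so Doob's maximal inequality (or BDG) bounds its supremum by $O(\eps^{(1-\alpha)+\gamma/2})$ in $L^2$, also $o(1)$ since $\alpha \le 1$. For the remainder $r_\eps = \phi_\eps - \Theta_\eps$ I would use the crude bound $|r_\eps(T+t) - \Phi_z(t) r_\eps(T)| \le (1 + \sup_{s \le 1}\|\Phi_z(s)\|)\sup_{s \le T+1}|r_\eps(s)|$, whose right-hand side tends to $0$ in probability by Lemma~\ref{lemma: Bakhtin}. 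Setting $\Gamma_{\eps,+}(t) = \eps^{-\alpha} r_+(t,z) + \phi_\eps(T+t) - \Phi_z(t)\phi_\eps(T)$ then completes the second identity.

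The main obstacle is the uniform-in-$t$ control of the stochastic increment over the shrinking window $[T, T+\eps^\gamma]$; Doob/BDG handles it cleanly once the cocycle identity has eliminated the contributions accumulated over $[0,T]$. The choice $\gamma \in (\alpha/2, \alpha)$ is precisely what makes the whole scheme work: $\gamma > \alpha/2$ ensures the quadratic error $t^2 = O(\eps^{2\gamma})$ from the rectification beats $\eps^\alpha$, while $\gamma < \alpha \le 1$ keeps the window small enough for the stochastic increment of order $\eps^{1-\alpha+\gamma/2}$ to be negligible after division by $\eps^\alpha$.
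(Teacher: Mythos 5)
Your proposal is correct. The pre-crossing identity~\eqref{eqn: X_before} is handled exactly as in the paper: $S^{T-t}x_0=S^{-t}z$, Lemma~\ref{lemma: rectification} at $x=z$ kills the linear term, and $\Gamma_{\eps,-}(t)=\eps^{-\alpha}r_-(t,z)$ is $O(\eps^{2\gamma-\alpha})$ deterministically. For the post-crossing identity the two arguments diverge in packaging. The paper applies the strong Markov property to $\tilde X_\eps(t)=X_\eps(T+t)$ and re-invokes Lemma~\ref{lemma: Bakhtin} on the shifted SDE with (random, $\Fc_T$-measurable) initial perturbation $\eps^\alpha\phi_\eps(T)$, which produces $\hat\phi_\eps(t)=\Phi_z(t)\phi_\eps(T)+\theta_\eps(t)$ directly; you instead stay with the original application of the lemma, use the cocycle identity $\Phi_{x_0}(T+t)=\Phi_z(t)\Phi_{x_0}(T)$ to telescope $\Theta_\eps(T+t)-\Phi_z(t)\Theta_\eps(T)$ into increments over $[T,T+t]$, estimate the Lebesgue piece by $O(\eps^{\alpha_1-\alpha+\gamma})$ and the martingale piece by $O(\eps^{1-\alpha+\gamma/2})$ via BDG, and absorb the increment of $r_\eps$ using its uniform smallness on $[0,T+1]$. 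The error terms are in fact the same quantities in both versions (your telescoped integrals are the paper's integrals along the orbit of $z$ after a change of variables), so the content is equivalent; your route has the merit of not requiring a second application of Lemma~\ref{lemma: Bakhtin} with a random initial datum, at the cost of carrying out the cocycle computation explicitly. One small point worth making explicit if you write this up: the prefactor $\Phi_{x_0}(T+t)$ (equivalently $\Phi_z(t)\Phi_{x_0}(T)$) must be bounded uniformly for $t\in[0,\eps^\gamma]$, which is immediate by continuity, and the a.s.\ continuity of $\Gamma_{\eps,+}$ follows since every term in your decomposition is continuous in $t$.
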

\begin{proof}
Due to Lemma~\ref{lemma: Bakhtin},  the flow property, and~\eqref{eqn: def_r_pm} we have 
\begin{align*}
X_\eps (T-t) &= S^{T-t}x_0 + \eps^\alpha \phi_\eps (T-t) \\
&= S^{-t}z + \eps^\alpha \phi_\eps (T-t) \\
&=z-tb(z) + r_-(t,z)  + \eps^\alpha \phi_\eps (T-t).
\end{align*}
The first estimate with $\Gamma_{\eps,-} (t) = \eps^{-\alpha} r_-(t,z)$ follows from Lemma~\ref{lemma: rectification} for $x=z$.

Due to Strong Markov property and Lemma~\ref{lemma: Bakhtin} the process $\tilde X_\eps (t) = X_\eps (t + T)$ is a solution of the initial value problem
\begin{align*}
d\tilde X_\eps(t) &= (b( \tilde X_\eps(t))+\eps^{\alpha_1}\Psi_\eps(\tilde X_\eps(t)))dt + \eps \sigma (\tilde X_\eps(t)) d\tilde W, \\
\tilde X_\eps (0) &= X_\eps (T)=z+\eps^\alpha \phi_\eps (T),
\end{align*}
with respect to the Brownian Motion $\tilde W(t)=W(t+T)-W(T)$. So, again, applying Lemma~\ref{lemma: Bakhtin} to this shifted equation, we obtain $\tilde X_\eps (t)=S^tz+\eps^\alpha \hat \phi_\eps (t)$, where, for $t>0$
\begin{align*}
\hat \phi_\eps (t) = \Phi_z (t) \phi_\eps (T) + \theta_\eps (t),
\end{align*}
and 
\[
\theta_\eps (t)= \eps^{1-\alpha}   \Phi_z (t) \int_0^t \Phi_z (s)^{-1}\sigma (S^s z)d\tilde W(s)  
+ \eps^{\alpha_1-\alpha}\Phi_z(t) \int_0^t \Phi_z (s)^{-1} \Psi_0( S^s z) ds + \tilde r_\eps (t),
\] where $\tilde r_\eps$ converges to 0 uniformly over compact time intervals in probability.
Then due to~\eqref{eqn: def_r_pm},
\begin{align*}
\tilde X_\eps (t) &=S^tz+\eps^\alpha (\Phi_z (t) \phi_\eps (t) + \theta_\eps (t) ) \\
&= z+tb(z) + r_+(t,z)+\eps^\alpha (\Phi_z (t) \phi_\eps (t) + \theta_\eps (t) ).
\end{align*}
Hence, with $\Gamma_{\eps,+} (t)=\theta_\eps (t) + \eps^{-\alpha} r_+(t,z)$ the result is a consequence of Lemma~\ref{lemma: rectification}.
\end{proof}

\bigskip

Let us now parametrize, locally around $z$, the hypersurface $M$ as a graph of a $C^2$-function $F$ over $T_zM$, i.e.,
$y\mapsto z+y+F(y)\cdot b(z)$ gives a $C^2$-parametrization of a neighborhood of $z$ in $M$ by a neighborhood of $0$ in $T_zM$. Moreover, $DF(0)=0$ so that
$|F(y)|=O(|y|^2)$, $y\to 0$.
With this definition, it is clear that, for $w\in \R^d$ with $w-z$ small enough, $w\in M$ if and only if $\pi_b(w-z)=F(\pi_M(w-z))$.

Let us define
\begin{align*} 
\Omega_{1,\eps}&=\bigl\{\tau_\eps = \inf \{  t\geq 0: \pi_b \left(X_\eps(t)-z \right) = F\left( \pi_M \left(X_\eps(t)-z \right)  \right)\}\bigr\},\\
\Omega_{2,\eps}&= \left \{  |\tau_\eps - T| \leq \eps^\gamma \right \},\\
\Omega_\eps&=\Omega_{1,\eps}\cap \Omega_{2,\eps}.
\end{align*} 

\begin{lemma}\label{lm:omega_eps_to_1} $\Pp(\Omega_{\eps})\to 1$ as $\eps\to 0$.
\end{lemma}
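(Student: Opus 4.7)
The plan is to decompose the time axis into a pre-exit piece $[0, T-\eps^\gamma]$, on which I want to rule out any earlier hit of $M$, and an exit window $[T-\eps^\gamma, T+\eps^\gamma]$, on which I want to show that $X_\eps$ is forced to cross $M$ inside the coordinate chart around $z$. Together these two facts will give both $\Omega_{2,\eps}$ (the hit occurs in the window) and $\Omega_{1,\eps}$ (the hit is captured by the graph equation), which is what we want.

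First, on $[0, T-\eps^\gamma]$ I would use Lemma~\ref{lemma: Bakhtin} to write $X_\eps(t) = S^tx_0 + \eps^\alpha\phi_\eps(t)$ with $\phi_\eps$ tight in $C([0,T])$ under the uniform norm. Because $T$ is the first time the deterministic orbit hits $M$ and $M$ is closed, for each fixed small $\delta>0$ the segment $\{S^tx_0:t\in[0,T-\delta]\}$ sits at a fixed positive distance from $M$; on $[T-\delta,T-\eps^\gamma]$, the transversality $b(z)\notin T_zM$ together with $\dot S^tx_0=b(S^tx_0)$ yields $\mathrm{dist}(S^tx_0,M)\geq c(T-t)\geq c\eps^\gamma$ for some $c>0$ once $\delta$ is small. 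Since $\gamma<\alpha$, the correction $\eps^\alpha\phi_\eps$ is $o_\Pp(\eps^\gamma)$ uniformly on $[0,T]$, so with probability tending to one $X_\eps(t)\notin M$ for every $t\in[0,T-\eps^\gamma]$.

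Next, on the exit window Lemma~\ref{lemma: X_before_after} gives the expansions \eqref{eqn: X_before} and \eqref{eqn: X_after}, which imply $|X_\eps(t)-z|=O_\Pp(\eps^\gamma)$ throughout the window. So, on a high-probability event, $X_\eps(t)$ lies in the coordinate neighborhood of $z$ on which $M$ is the graph of $F$. I would then define
\[
g_\eps(s) = \pi_b(X_\eps(T+s)-z) - F\bigl(\pi_M(X_\eps(T+s)-z)\bigr), \quad s\in[-\eps^\gamma,\eps^\gamma],
\]
and observe that $\pi_b b(z)=1$, $\pi_M b(z)=0$ and $|F(y)|=O(|y|^2)$ together yield $g_\eps(\pm\eps^\gamma)=\pm\eps^\gamma+O_\Pp(\eps^\alpha)+O_\Pp(\eps^{2\alpha})$. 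Since $\gamma<\alpha$, the two endpoint values have opposite signs with probability tending to one, and continuity of $g_\eps$ produces a zero in $(-\eps^\gamma,\eps^\gamma)$: the first hit of $M$ occurring after $T-\eps^\gamma$ takes place inside the window and is exactly characterized by the graph equation. Combined with the pre-exit step this yields $\tau_\eps\in[T-\eps^\gamma,T+\eps^\gamma]$ and identifies $\tau_\eps$ as the first time the graph equation holds, i.e.\ the event $\Omega_{1,\eps}\cap\Omega_{2,\eps}$.

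The main technical obstacle I anticipate is the quantitative transverse estimate $\mathrm{dist}(S^tx_0,M)\geq c(T-t)$ on $[T-\delta,T-\eps^\gamma]$: it has to be strong enough to beat the random perturbation of size $\eps^\alpha$. It depends on the transversality $b(z)\notin T_zM$ and on the $C^2$ regularity of $M$ (so that $M$ is well approximated by its tangent plane near $z$), and it is precisely where the restriction $\gamma<\alpha$ is essential. Once this is in place, the remaining probabilistic content is only the tightness of $\phi_\eps$ and of $\Gamma_{\eps,\pm}$ provided by Lemmas~\ref{lemma: Bakhtin} and~\ref{lemma: X_before_after}, which allows all of the above ``high probability'' statements to be made simultaneously on a single event by taking a large but fixed uniform bound on these processes.
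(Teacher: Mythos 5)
Your proposal is correct and follows essentially the same route as the paper: the lower bound $\tau_\eps\ge T-\eps^\gamma$ comes from the distance estimate $d(\{S^tx_0:t\le T-\delta\},M)\ge c\delta$ plus the $O_\Pp(\eps^\alpha)=o_\Pp(\eps^\gamma)$ bound from Lemma~\ref{lemma: Bakhtin}, and the upper bound $\tau_\eps\le T+\eps^\gamma$ comes from the expansion \eqref{eqn: X_after} together with $|F(y)|=O(|y|^2)$ and $\gamma<\alpha$. Your explicit sign-change argument for $g_\eps$ at both endpoints is only a cosmetic repackaging of the paper's one-sided check at $T+\eps^\gamma$ on the event $\Omega_{1,\eps}$.
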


\begin{proof}
The definition of $F$ and Lemma~\ref{lemma: Bakhtin} imply that  as $\eps\to 0$, $\Pp(\Omega_{1,\eps})\to 1$.

We use~\eqref{eqn: X_after} to conclude that
\[
\pi_b \left( X_\eps (T+\eps^\gamma) -z \right)=\eps^\gamma  \left( 1+ \eps^{\alpha-\gamma} \pi_b \left( \Phi_z(\eps^\gamma) \phi_\eps(T) + \Gamma_{\eps,+} (\eps^\gamma) \right) \right), 
\]
and 
\[
F \left( \pi_M \left( X_\eps (T+\eps^\gamma) -z \right) \right)=F \left( \eps^\alpha \pi_M \left(  \Phi_z(\eps^\gamma) \phi_\eps(T) + \Gamma_{\eps,+} (\eps^\gamma) \right) \right).
\]
Since  $|F(x)|= O(|x|^2)$, these estimates imply that
\begin{multline*}
\limsup_{\eps \to 0} \Pp\left( \left\{  \tau_\eps > T+\eps^\gamma \right \}\cap \Omega_{1,\eps} \right)\\ \le \limsup_{\eps \to 0} \Pp \left\{  \pi_b \left( X_\eps (T+\eps^\gamma) -z \right) 
\le F \left( \pi_M \left( X_\eps (T+\eps^\gamma) -z \right) \right) \right \} =0.
\end{multline*}
It remains to prove
\begin{equation}
\lim_{\eps \to 0} \Pp \left\{  \tau_\eps <T-\eps^\gamma \right \}=0.
\label{eq:rough_lower_estimate_exit_time}
\end{equation}
Let us denote the Hausdorff distance between sets by $d(\cdot,\cdot)$. Then
an obvious estimate
\[
d(\{S^tx_0: 0\le t\le T-\delta\},M)\ge c\delta 
\]
 holds true for some $c>0$ and all sufficiently small $\delta>0$. Now~\eqref{eq:rough_lower_estimate_exit_time} follows from Lemma~\ref{lemma: Bakhtin},
and the proof is complete
\end{proof}

\begin{lemma} \label{prop: tau_conv}
Define $\tau_\eps^\prime = \tau_\eps - T $. Then,
\[
\eps^{-\alpha} \tau_\eps^\prime +  \pi_b \phi_\eps (T) \stackrel{\Pp}{\longrightarrow} 0, \quad \eps \to 0.
\]
\end{lemma}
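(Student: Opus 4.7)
The plan is to restrict to the high-probability event $\Omega_\eps$ of Lemma~\ref{lm:omega_eps_to_1}, on which $|\tau_\eps'|\le \eps^\gamma$ and $\pi_b(X_\eps(\tau_\eps)-z) = F\bigl(\pi_M(X_\eps(\tau_\eps)-z)\bigr)$. Merging the two cases $\tau_\eps'\ge 0$ and $\tau_\eps'<0$ from Lemma~\ref{lemma: X_before_after}, I will rewrite these expansions uniformly on $\Omega_\eps$ in the form
\[
X_\eps(T+\tau_\eps')-z = \tau_\eps'\,b(z) + \eps^\alpha\bigl(\phi_\eps(T) + R_\eps\bigr),
\]
where $R_\eps$ collects three small contributions: the remainders $\Gamma_{\eps,\pm}(|\tau_\eps'|)$ of Lemma~\ref{lemma: X_before_after}, the continuity correction $\phi_\eps(T+\tau_\eps')-\phi_\eps(T)$ (which only appears in the ``before'' case), and the linearization shift $(\Phi_z(\tau_\eps')-I)\phi_\eps(T)$ (which only appears in the ``after'' case). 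All three tend to $0$ in probability: the first by Lemma~\ref{lemma: X_before_after}, the second because $|\tau_\eps'|\le\eps^\gamma\to 0$ while the modulus of continuity of $\phi_\eps$ on an $\eps^\gamma$-scale vanishes in probability thanks to the tightness of $\phi_\eps$ in $C[0,T]$ from Lemma~\ref{lemma: Bakhtin}, and the third because $\Phi_z(0)=I$ and $\phi_\eps(T)$ is tight.

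Applying the projections $\pi_b$ and $\pi_M$ and using $\pi_b b(z)=1$, $\pi_M b(z)=0$, I get
\[
\pi_b(X_\eps(\tau_\eps)-z) = \tau_\eps' + \eps^\alpha\bigl(\pi_b\phi_\eps(T)+\pi_b R_\eps\bigr),
\]
\[
\pi_M(X_\eps(\tau_\eps)-z) = \eps^\alpha\bigl(\pi_M\phi_\eps(T)+\pi_M R_\eps\bigr).
\]
Because $|F(y)|=O(|y|^2)$ near the origin and $\pi_M\phi_\eps(T)$ is tight, the defining identity $\pi_b(\cdot)=F(\pi_M(\cdot))$ on $\Omega_\eps$ becomes
\[
\tau_\eps' + \eps^\alpha \pi_b\phi_\eps(T) = -\eps^\alpha \pi_b R_\eps + \rho_\eps,
\]
where $\rho_\eps$ is of order $\eps^{2\alpha}$ in probability. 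Dividing by $\eps^\alpha$ yields the claim on $\Omega_\eps$, and since $\Pp(\Omega_\eps^c)\to 0$ the convergence extends unconditionally.

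The main hurdle I foresee is not depth but bookkeeping: the ``before'' and ``after'' expansions of Lemma~\ref{lemma: X_before_after} involve $\phi_\eps$ at slightly different times and carry geometrically different linearization errors, so they must be uniformized over the random $\tau_\eps'\in[-\eps^\gamma,\eps^\gamma]$. This is accomplished purely via the tightness and path continuity of $\phi_\eps$ inherited from Lemma~\ref{lemma: Bakhtin} together with the continuity of $\Phi_z$ at $0$, so no genuinely new estimate is needed.
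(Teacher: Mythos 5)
Your proposal is correct and follows essentially the same route as the paper's proof: the paper splits explicitly into the events $A_\eps=\{0\le\tau_\eps'\le\eps^\gamma\}\cap\Omega_{1,\eps}$ and $B_\eps=\{-\eps^\gamma\le\tau_\eps'<0\}\cap\Omega_{1,\eps}$ and adds the two resulting identities, whereas you merge the expansions from Lemma~\ref{lemma: X_before_after} into a single uniform form before projecting, but the controlling estimates (smallness of $\Gamma_{\eps,\pm}$, the modulus of continuity of $\phi_\eps$ near $T$, continuity of $\Phi_z$ at $0$, and $|F(y)|=O(|y|^2)$) are exactly those used in the paper. The only cosmetic difference is bookkeeping; no new idea is involved.
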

\begin{proof}
 Let us define $A_\eps = \left \{  0 \leq \tau_\eps^\prime  \leq \eps^\gamma \right \}\cap\Omega_{1,\eps}$ and 
$B_\eps = \left \{  -\eps^\gamma \leq \tau_\eps^\prime  < 0 \right \}\cap\Omega_{1,\eps}$, so that 
$\Omega_\eps = A_\eps \cup B_\eps$.
We can use~\eqref{eqn: X_after} and the definition of $\Omega_{1,\eps}$ to get
\begin{align*}
\one_{A_\eps} \tau_\eps^\prime + \one_{A_\eps} \eps^\alpha  \pi_b \left( \Phi_z (\tau_\eps^\prime) \phi_\eps (T) +\Gamma_{\eps,+}(\tau_\eps^\prime) \right) = 
\one_{A_\eps}F\left( \eps^\alpha \pi_M \left( \Phi_z (\tau_\eps^\prime) \phi_\eps (T) +\Gamma_{\eps,+}(\tau_\eps^\prime) \right) \right).
\end{align*}
This implies
\begin{align} 
\notag
\one_{A_\eps} \eps^{-\alpha} \tau_\eps^\prime &=\eps^{-\alpha} \one_{A_\eps} F\left( \eps^\alpha \pi_M \left( \Phi_z (\tau_\eps^\prime) \phi_\eps (T) +\Gamma_{\eps,+}(\tau_\eps^\prime) \right) \right)
\\ \notag &\quad-\one_{A_\eps} \pi_b \left( \Phi_z (\tau_\eps^\prime) \phi_\eps (T) +\Gamma_{\eps,+}(\tau_\eps^\prime) \right) \\
\notag &= -\one_{A_\eps} \pi_b \left( \Phi_z (\tau_\eps^\prime) \phi_\eps (T)  \right) + r_{\eps,1} \\
&= -\one_{A_\eps} \pi_b  \phi_\eps (T)  +\one_{A_\eps} \pi_b \left( (I-\Phi_z (\tau_\eps^\prime) )\phi_\eps (T)  \right)+r_{\eps,1}, \label{eqn: tau_on_A}
\end{align}
where $r_{\eps,1}$ is a random variable that converges to $0$ in probability as $\eps \to 0$.

Likewise, since $\tau_\eps = T - (-\tau_\eps^\prime)$ and $\one_{B_\eps} \tau_\eps^\prime \le 0$, we can use ~\eqref{eqn: X_before} and the definition of $\Omega_{1,\eps}$ to see that 
\[
\one_{B_\eps}\tau_\eps^\prime + \one_{B_\eps}\eps^\alpha \pi_b \left ( \phi_\eps (T+\tau_\eps^\prime) + \Gamma_{\eps,-} (-\tau_\eps^\prime) \right ) = \one_{B_\eps} F \left (\eps^\alpha\left ( \phi_\eps (T+\tau_\eps^\prime) + \Gamma_{\eps,-} (-\tau_\eps^\prime) \right ) \right ).
\]
Hence, proceeding as before, we see that 
\begin{align*}
\one_{B_\eps} \eps^{-\alpha} \tau_\eps^\prime &= -\one_{B_\eps} \pi_b \phi_\eps (T+\tau_\eps^\prime) + r_{\eps,2} \\
&= -\one_{B_\eps} \pi_b \phi_\eps (T)+\one_{B_\eps} \pi_b \left( \phi_\eps (T)-\phi_\eps(T+\tau_\eps^\prime) \right)+r_{\eps,2}
\end{align*}
for some random variable $r_{\eps,2}$ such that $r_{\eps,2} \to 0$ in probability as $\eps \to 0$. Adding this identity and~\eqref{eqn: tau_on_A}, we see that on $\Omega_{\eps}$
\[
 \eps^{-\alpha} \tau_\eps^\prime=-\pi_b \phi_\eps (T)+\one_{A_\eps} \pi_b \left( ( I-\Phi_z (\tau_\eps^\prime) ) \phi_\eps (T)  \right)+\one_{B_\eps} \pi_b \left( \phi_\eps (T)-\phi_\eps(T+\tau_\eps^\prime) \right)+r_{\eps,1}+r_{\eps,2}.
\]
Due to Lemma~\ref{lm:omega_eps_to_1}, to finish the proof it is sufficient to notice that as $\eps \to 0$
\begin{equation}
\label{eqn: aux_1}
\sup_{0\leq t \leq \eps^\gamma } \left| (I-\Phi_z (t) ) \phi_\eps (T)  \right| \stackrel{\Pp}{\longrightarrow} 0,
\end{equation} and
\begin{equation} \label{eqn: aux_2}
\sup_{0\leq t \leq \eps^\gamma } \left| \phi_\eps (T)-\phi_\eps(T+t) \right|\stackrel{\Pp}{\longrightarrow} 0.
\end{equation}
\end{proof}

Lemma~\ref{prop: tau_conv} takes care of the time component in Theorem~\ref{thm: Main}. We shall consider the spatial component now.

 Let $A_\eps$ and $B_\eps$ be as in the proof of Lemma~\ref{prop: tau_conv}. Then, \eqref{eqn: X_after} implies
\begin{multline}
\one_{A_\eps} \left (  X_\eps (\tau_\eps) - z \right)\eps^{-\alpha} = \one_{A_\eps}\eps^{-\alpha} \tau_\eps^\prime b(z) + \one_{A_\eps} \left(  \Phi_z(\tau_\eps^\prime)\phi_\eps(T) + \Gamma_{\eps,+}(\tau_\eps^\prime) \right) \\
= \one_{A_\eps} \left( \eps^{-\alpha}\tau_\eps^\prime b(z) + \phi_\eps (T) \right) + \one_{A_\eps} \left[  (\Phi_z(\tau_\eps^\prime)- I )\phi_\eps(T) + \Gamma_{\eps,+}(\tau_\eps^\prime) \right] \label{eqn: spacial_after}
\end{multline}
Likewise, from~\eqref{eqn: X_before} we get that  
\begin{multline}
\one_{B_\eps} \left (  X_\eps (\tau_\eps) - z \right)\eps^{-\alpha} = \one_{B_\eps}\eps^{-\alpha} \tau_\eps^\prime b(z) + \one_{B_\eps} \left(  \phi_\eps(T+\tau_\eps^\prime) + \Gamma_{\eps,-}(-\tau_\eps^\prime) \right) \\
= \one_{B_\eps} \left(\eps^{-\alpha} \tau_\eps^\prime b(z) + \phi_\eps (T) \right) + \one_{B_\eps} \left[  (\phi_\eps(T+\tau_\eps^\prime)-\phi_\eps(T) ) + \Gamma_{\eps,-}(-\tau_\eps^\prime) \right] \label{eqn: spacial_before}.
\end{multline}
Adding~\eqref{eqn: spacial_after} and~\eqref{eqn: spacial_before} and proceding as in the proof of Lemma~\ref{prop: tau_conv} we see that
\[
\left (  X_\eps (\tau_\eps) - z \right)\eps^{-\alpha} - \pi_M \phi_\eps (T)= \left(\eps^{-\alpha} \tau_\eps^\prime +\pi_b \phi_\eps (T) \right)b(z) + \rho_\eps,
\] where, due to~\eqref{eqn: aux_1},~\eqref{eqn: aux_2} and Lemma~\ref{lemma: X_before_after}, $\rho_\eps \to 0$ in probability as $\eps \to 0$.  From this expression and Lemma~\ref{prop: tau_conv} we get that 
\[
\left (  X_\eps (\tau_\eps) - z \right)\eps^{-\alpha} - \pi_M \phi_\eps (T) \overset{\Pp}{\longrightarrow} 0, \quad \eps \to 0.
\]
Then, using this and the convergence in Lemma~\ref{prop: tau_conv}
\[
\eps^{-\alpha}(\tau_\eps-T, X_\eps(\tau_\eps)-z)=R_\eps + G(\phi_\eps (T) ),
\]
where $R_\eps$ is a random variable such that $R_\eps \to 0$ in probability as $\eps \to 0$. $G$ is the continuous function $x \mapsto (-\pi_b x, \pi_M x )$. Hence, Theorem~\ref{thm: Main} follows from the convergence in~Lemma~\ref{lemma: Bakhtin}.

It remains to prove Lemma~\ref{lemma: Bakhtin}, the core of the proof of the main result.

\begin{proof}[Proof of Lemma~\ref{lemma: Bakhtin}]
Let $\Delta_\eps^t= X_\eps (t) -S^t x_0$ and note that it satisfies the equation
\[
d\Delta_\eps^t= \left(  \left( b(X_\eps (t))-b(S^t x_0) \right) + \eps^{\alpha_1} \Psi_\eps(X_\eps (t)) \right)dt + \eps \sigma (X_\eps (t) ) dW(t),
\]
with initial condition $\Delta_\eps^0 = \eps^{\alpha_2} \xi_\eps$. We want to study the properties of this equation. We start with the difference in $b$.
Since $b$ is a $C^2$ vector field, we may write 
\begin{align} 
b(X_\eps (t))-b(S^t x_0) &=Db(S^t x_0) \Delta_\eps^t + Q_1(S^t x_0, \Delta_\eps^t).
 \label{eqn: expansionb}
\end{align}
Also, we can write 
\begin{equation} \label{eqn: expansion_Psi}
\Psi_\eps ( X_\eps (t) )=\Psi_0 ( S^t x_0)+Q_2 (S^t x_0,\Delta_\eps^t)+R_\eps(S^t x_0),
\end{equation} and
\begin{equation} \label{eqn: expansionSigma}
\sigma ( X_\eps (t) )=\sigma ( S^t x_0)+Q_3(S^t x_0,\Delta_\eps^t),
\end{equation}
where 
\[
 R_\eps(x)=\Psi_\eps(x)-\Psi_0(x)=o(1),\quad \eps\to 0,
\]
uniformly in $x$;
$Q_i:\R^d \times \R^d \to \R^d$, $i=2,3$ satisfies
\begin{equation}
|Q_i (u,v)|\leq K   |v|,\quad u,v\in\R^d  \label{eqn: bound_Q2}.
\end{equation}
We can assume that the constant $K>0$ in~\eqref{eqn: bound_Q1} and~\eqref{eqn: bound_Q2} 
is the same for simplicity of notation. 

Let $Q=Q_1+ \eps^{\alpha_1} Q_2+\eps^{\alpha_1}R_\eps$. Combine~\eqref{eqn: expansionb}, \eqref{eqn: expansion_Psi}, and \eqref{eqn: expansionSigma} to get 
\begin{align} \notag
d\Delta_\eps^t = &\left(  A(t)\Delta_\eps^t + \eps^{\alpha_1} \Psi_0 ( S^t x_0) + Q(S^t x_0,  \Delta_\eps^t) \right)dt \\
&\quad + \eps \left(  \sigma ( S^t x_0 ) +Q_3(S^t x_0,\Delta_\eps^t) \right) dW(t), \label{eqn: SDE_Delta} \\
\Delta_\eps^0 = &\eps^{\alpha_2} \xi_\eps.
\end{align}
Hence, applying Duhamel's principle to ~\eqref{eqn: SDE_Delta} and using~\eqref{eqn: SDE_Theta},  we get
\begin{align} \notag
\Delta_\eps^t & = \eps^\alpha \Theta_\eps (t)+ \Phi_{x_0} (t) \int_0^t \Phi_{x_0} (s)^{-1}Q(S^s x_0,  \Delta_\eps^s )ds\\ \notag
&\quad + \eps \Phi_{x_0} (t) \int_0^t \Phi_{x_0} (s)^{-1}Q_3(S^s x_0, \Delta_\eps^s )dW(s) \\ \label{eqn: expansion_X}
&= \eps^\alpha \Theta_\eps (t) + \Theta'_\eps (t),
\end{align} where $\Theta'_\eps$ is defined by~\eqref{eqn: expansion_X}.
A simple inspection of~\eqref{eqn: SDE_Theta} shows that $\left( \Theta_\eps \right)_{\eps>0}$ 
 converges in  distribution in $C(0,T)$ to the process $\phi_0 (t)$. This convergence is in probability if $\alpha_2 > {\alpha}$
or $\xi_\eps\to\xi_0$ in probability.
 Therefore, the lemma will follow with $\phi_\eps = \Theta_\eps + \eps^{-\alpha} \Theta'_\eps$ if we show that 
\begin{equation} \label{eqn: estimate_Theta'}
\eps^{-\alpha} \sup_{t \leq T} | \Theta'_\eps (t) | \overset{\Pp}{\longrightarrow} 0, \quad \eps \to 0.
\end{equation}
 For any $\delta \in (1/2,1)$, we introduce the stopping time
\[
l_\eps ( \delta )=\inf \left\{ t>0: |\Delta_\eps^t| \geq \eps^{\alpha \delta } \right \}.
\]
Now, $\Theta'_\eps=\Theta'_{\eps,1}+\eps \Theta'_{\eps,2}$, where
\[
\Theta'_{\eps,1} (t) = \Phi_{x_0} (t) \int_0^t \Phi_{x_0} (s)^{-1}Q(S^s x_0, \Delta_\eps^s)ds,
\] and
\[
\Theta'_{\eps,2} (t)=\eps \Phi_{x_0} (t) \int_0^t \Phi_{x_0} (s)^{-1}Q_3(S^s x_0,  \Delta_\eps^s )dW(s).
\]
Bounds~\eqref{eqn: bound_Q1}, and \eqref{eqn: bound_Q2} imply
\begin{equation}
\sup_{t\leq T\wedge l_\eps(\delta) }|\Theta'_{\eps,1} (t) | = O( \eps^{2 \alpha \delta}  + \eps^ {{\alpha_1}+\alpha \delta})+o(\eps^{\alpha_1})=o(\eps^{\alpha}). \label{eqn: Theta_prime_1}
\end{equation}
Likewise,~\eqref{eqn: bound_Q2} for $Q_3$ and BDG inequality imply that for any $\kappa>0$ there is a constant $K_\kappa$ such that
\begin{equation}
\Pp \left \{ \sup_{t\leq T\wedge l_\eps(\delta) }|\Theta'_{\eps,2} (t) | > K_\kappa \eps^ {1+\alpha\delta} \right \} < \kappa \label{eqn: Theta_prime_2}
\end{equation} for all $\eps>0$ small enough. Then, this together with~\eqref{eqn: Theta_prime_1} imply that 
\begin{equation}
\eps^{-\alpha\delta} \sup_{t \leq T \wedge l_\eps (\delta)} | \Theta'_\eps (t) | \overset{\Pp}{\longrightarrow} 0, \quad \eps \to 0.
\label{eq:theta-prime-small}
\end{equation}
Then, if $l_\eps (\delta) < T$ we use~\eqref{eqn: expansion_X} to get
\begin{align*}
1 &=\eps^{-\alpha\delta} \sup_{t\leq T \wedge l_\eps (\delta)} | \Delta_\eps^t | \\
& \leq \eps^{\alpha(1-\delta)} \sup_{t\leq  T \wedge l_\eps (\delta)} | \Theta_\eps (t) | +\eps^{-\alpha\delta} 
\sup_{t\leq  T \wedge l_\eps (\delta)}  | \Theta'_\eps (t) |.
\end{align*} The r.h.s. converges to 0 in probability due to~\eqref{eq:theta-prime-small} and the tightness of distributions of $\Theta_\eps$. Hence, $\Pp \{ l_\eps (\delta) < T \} \to~0$ as $\eps~\to~0$. Using $T$ instead of $T \wedge l_\eps ( \delta) $ in~\eqref{eqn: Theta_prime_1} and~\eqref{eqn: Theta_prime_2}, we see that with the choice of $\delta > 1/2$, \eqref{eqn: estimate_Theta'} follows and the proof is finished.
\end{proof}

\section{Proof of Lemmas~\ref{lemma: conditioned_drift} and~\ref{lemma: b_approx}}\label{sec:aux-1-d}

\begin{proof}[Proof of Lemma~\ref{lemma: conditioned_drift}] Let us find the generator of the conditioned diffusion. To that end we denote the generator of the original diffusion
by $L_\eps$:
\begin{equation}
\label{eq:generator1}
 L_\eps f(x)=b(x)f'(x)+\frac{\eps^2}{2}\sigma^2(x)f''(x)=\lim_{t\to 0}\frac{\E_x f(X_\eps)-f(x)}{t},
\end{equation}
where $f$ is any bounded $C^2$-function with bounded first two derivatives and $\E_x$ denotes expectation with respect to the
measure $\Pp_x$,
the element of the Markov family describing the Markov process emitted from point $x$.

Let us denote $u_\eps(x)=\Pp_x(B_\eps)$. This function solves the following boundary-value problem for the 
backward Kolmogorov equation:
\begin{align*}
L_\eps u_\eps(x)=0,\quad u_\eps(a_1)=0,\quad
u_\eps(a_2)=1. 
\end{align*}
Using~\eqref{eq:generator1}, it is easy to check that a unique solution is given by
\[
 u_\eps(x)=\frac{\int_{a_1}^xh_\eps(y)dy}{\int_{a_1}^{a_2}h_\eps(y)dy},
\]
where $h_\eps$ is defined in~\eqref{eq:h_eps}.

Now we can compute the generator $\bar L$ of the conditioned flow. For any smooth and bounded function $f\in C^2$ with
bounded first two derivatives, we can write
\begin{align*}
\E_x[f(X_\eps)|B_\eps]&=u^{-1}(x)\E_x f(X_\eps(t))\one_{B_\eps}
\\&=u_\eps^{-1}(x)\E_x f(X_\eps(t))\one_{B_\eps}\one_{\{\tau_\eps\ge t\}}+R_\eps
\\&=u_\eps^{-1}(x)\E_x\E_x [f(X_\eps(t))\one_{B_\eps}\one_{\{\tau_\eps\ge t\}}|\Fc_t]+R_\eps
\\&=u_\eps^{-1}(x)\E_x f(X_\eps(t)) \Pp_{X_\eps(t)}(B_\eps)+R_\eps
\\&=u_\eps^{-1}(x)\E_x f(X_\eps(t)) u_\eps(X_\eps(t))+R_\eps,
\end{align*}
where
\[
|R_\eps|=u_\eps^{-1}(x)|\E_x f(X_\eps)\one_{B_\eps}\one_{\{\tau_\eps<t\}}|\le C(x)\Pp\{\tau_\eps<t\}=o(t)
\]
for some $C(x)>0$. Therefore, we
obtain
\begin{align*}
\bar L f(x)&=\lim_{t\to 0} \frac{\E_x[f(X_\eps(t))|B_\eps]-f(x)}{t}
\\ &=\lim_{t\to 0}\frac{u_\eps^{-1}(x)\E_x f(X_\eps(t)) u_\eps(X_\eps(t))-f(x)}{t}
\\ &=\frac{1}{u_\eps(x)}\lim_{t\to 0}\frac{\E_x f(X_\eps(t)) u_\eps(X_\eps(t))-f(x)u_\eps(x)}{t}
\\ &= \frac{1}{u_\eps(x)} L(fu_\eps) (x)
\\ &= \left(b(x)+\eps^2\sigma^2(x)\frac{u'_\eps(x)}{u_\eps(x)}\right)f'(x)+\eps^2\frac{\sigma^2(x)}{2}f''(x).
\\ &= \left(b(x)+\eps^2\sigma^2(x)\frac{h_\eps(x)}{\int_{a_1}^x h_\eps(y)dy}\right)f'(x)+\eps^2\frac{\sigma^2(x)}{2}f''(x),
\end{align*}
completing the proof.
\end{proof}

\begin{proof}[Proof of Lemma~\ref{lemma: b_approx}] The proof is a variation of Laplace's method. 
Let 
\begin{equation}
\Phi (x)=2 \int_{a_1}^x \frac{ b(y)}{ \sigma^2 (y) } dy,\quad x\ge a_1, \label{eqn: def_Phi}
\end{equation}
so that $h_\eps (x) = e^{ -\Phi (x) / \eps^2 }$.  We take any $\beta \in (1,2)$ and break the integral of $h_\eps$ in two parts:
\[
\int_{a_1}^x e^{ -\Phi (y) /\eps^2 } dy = I_{\eps,1} (x) + I_{\eps,2} (x),
\]
where
\begin{equation} \label{eqn: I_eps,1}
I_{\eps,1} (x) =\int_{a_1}^{ x-\eps^\beta} e^{ -\Phi (y) /\eps^2 } dy,
\end{equation}
and 
\begin{equation} \label{eqn: I_eps,2}
I_{\eps,2} (x) =  \int_{ x-\eps^\beta} ^ x e^{ -\Phi (y) /\eps^2 } dy.
\end{equation}
The idea is to prove that $I_{\eps,1}$ is exponentially smaller than $I_{\eps,2}$ and then estimate~$I_{\eps,2}$. 

We start with some preliminaries for the function $\Phi$. Since both $b$ and $\sigma$ are $C^1$ and $\sigma \neq 0 $ in $[a_1, a_2 ]$ we conclude that $\Phi$ is a $C^2$ function so that  we can find a function $R:\R \times \R \to \R$ and a number $\delta_0>0$ such that for every $x,y \in [a_1,a_2+\delta_0]$, we have the expansion
\begin{equation} \label{eqn: Taylor_phi}
\Phi (y)= \Phi (x) + \Phi ^ \prime (x) (y -x) + R(x, y-x ),
\end{equation}
and 
\begin{equation}
|R(x,v)| \leq K_1 |v|^2,\quad x \in [a_1, a_2+\delta_0], v \in \R, \label{eqn: bound_R}
\end{equation}
for some $K_1>0$.

To estimate $I_{\eps,1}$, we introduce 
\[J_{\eps,1} (x) = \frac{e^{\Phi(x) / \eps^2}}{\eps^2\sigma^2(x)} I_{\eps, 1 } (x),\quad x \in [a_1, a_2+\delta_0].\]
Since $\Phi$ is decreasing, we have that for some constant $K_2>0$ independent of $x \in [a_1, a_2+\delta_0]$,
\begin{align} 
J_{\eps,1} (x) \leq \frac{K_2}{\eps^2}e^ { ( \Phi(x)-\Phi(x-\eps^\beta) )  / \eps^2}. \label{eqn: estimate_2}
\end{align}
Since $\beta<2$ and $\Phi^\prime$ is negative and bounded away from zero, we conclude that there is
$\alpha(\eps)$ such that $\alpha(\eps)=o (\eps ^ 2 )$ as $\eps\to 0$ and 
\begin{equation}
\label{eq:sup_estimate-on-J_1}
 \sup_{x \in [a_1, a_2+\delta_0]} J_{\eps,1} (x) \leq \alpha(\eps).
\end{equation}

We now estimate $I_{\eps,2}$. Using expansion~\eqref{eqn: Taylor_phi} and the change of variables $u=- \Phi(x) (y-x)/\eps^2$, we get
\begin{align} \notag
I_{\eps,2}(x)&= e^{ -\Phi (x) /\eps^2 } \int_{ x- \eps^\beta}^x e^{ -\Phi ^ \prime (x) (y -x)/\eps^2 - R(x, y-x ) /\eps^2 } dy \\ \notag
&=-\frac{  \eps^2  } {\Phi^\prime (x) } e^{ -\Phi (x) /\eps^2 }  \int_{\Phi^\prime(x)/\eps^{2-\beta}}^0 e^{ u - R(x,-\eps^2 u/ \Phi^\prime(x) ) /\eps^2 } du \\
&=-\frac{  \eps^2 \sigma^2 (x) } {2 b(x) } e^{ -\Phi (x) /\eps^2 } J_{\eps,2} (x), \label{eqn: Laplace_int}
\end{align}
where we use~\eqref{eqn: def_Phi} to compute the derivative of $\Phi$, and we define $J_{\eps,2}$ by~\eqref{eqn: Laplace_int}.
Hence, combining ~\eqref{eqn: estimate_2} with the definition of $b_\eps$ and~\eqref{eqn: Laplace_int}, we get 
\begin{align*}
b_\eps (x) = b(x) + \frac{1}{J_{\eps,1}(x) - \frac{1}{2 b(x)} J_{\eps,2} (x) } .
\end{align*}
Due to \eqref{eq:sup_estimate-on-J_1}, the proof will be complete once we prove that for sufficiently small $\delta>0$,
\[
\limsup_{\eps \to 0}  \eps^{-2}\left(\sup_{x \in [x_0-\delta, a_2+\delta]} | J_{\eps,2} (x) -1 | \right) < \infty.
\]

Note that for any $\delta\in(0,x_0-a_1)$, some constant $K_3=K_3(\delta)>0$ and all $x \in [x_0-\delta,a_2+\delta]$,
\begin{align} \notag
|J_{\eps,2} (x)-1| &=\Bigl| \int_{\Phi^\prime (x)/\eps^{2-\beta} }^0 e^u (1 - e^{ - R(x,-\eps^2 u/ \Phi^\prime(x) ) /\eps^2 } )du  \\ \notag
& \quad +\int_{-\infty}^{\Phi^\prime (x)/\eps^{2-\beta} } e^u du\Bigr| \\
&\leq  \int_{\Phi^\prime (x)/\eps^{2-\beta}}^0 e^u | 1 - e^{ - R(x,-\eps^2 u/ \Phi^\prime(x) ) /\eps^2 } |du  + e^{- K_3/\eps^{2-\beta} }. \label{eqn: approx_int}
\end{align}
Using~\eqref{eqn: bound_R} we see that for some constant $K_4>0$ independent of $x \in [x_0-\delta,a_2+\delta]$ and $u \in \R$, 
\[
| R(x,-\eps^2 u/ \Phi^\prime(x) ) |/\eps^2 \leq K_4 \eps^2 u^2.
\]
In particular, 
\[
\sup_{x \in [x_0-\delta,a_2+\delta]}\sup_{ u \in [\Phi^\prime (x)/\eps^{2-\beta} , 0] } | R(x,-\eps^2 u/ \Phi^\prime(x) ) |/\eps^2 \leq K_4 \eps^{2 ( \beta -1) }.
\]
Since $\beta>1$, the r.h.s.\ converges to $0$ and we can apply a basic Taylor estimate which implies that for all $\eps>0$ small enough,
\[
 \sup_{x \in [x_0-\delta,a_2+\delta]} \sup_{ u \in [\Phi^\prime (x)/\eps^{2-\beta} , 0] } | 1 - e^{ - R(x,-\eps^2 u/ \Phi^\prime(x) ) /\eps^2 } | \leq K_5 \eps^2 u^2,
\]
for some $K_5>0$. Using this fact in the integral of~\eqref{eqn: approx_int}, we can find a constant $K_6=K_6(\delta)>0$ such that
\[
\sup_{x \in [x_0-\delta, a_2+\delta]}|J_{\eps,2}(x)-1| \leq K_6 \eps^2 + e^{-K_3/\eps^{2-\beta}}, 
\]
which finishes the proof.
\end{proof}

\bibliographystyle{plain}
\bibliography{happydle}
\end{document}